\newtheorem{theorem}{Theorem}[section]
\newtheorem{lemma}[theorem]{Lemma}
\newtheorem{cor}[theorem]{Corollary}
\theoremstyle{definition}
\newtheorem{example}[theorem]{Example}
\newtheorem{remark}[theorem]{Remark}
\numberwithin{equation}{section}
\begin{document}
\title[fixed points approximation]{the split feasibility and fixed point equality problems for quasi-nonexpansive mappings in Hilbert spaces}
\date{\today}
\author[L.B. Mohammed, A. K{\i}l{\i}\c{c}man] {L.B. Mohammed and A. K{\i}l{\i}\c{c}man}
\address{Department of Mathematics, Faculty of Science
\newline \indent \hspace{2mm} Universiti Putra Malaysia, 43400 Serdang Selangor, Malaysia.} 
\email{akilicman@science.upm.edu.my, lawanbulama@gmail.com}
\keywords {Iterative Algorithm.  Quasi-nonexpansive.  Split Feasibility Problem. Weak  Convergence.}
\begin{abstract}
In this paper, we introduce  a new problem called the split feasibility and fixed point equality problems (SFFPEP) and 
propose a new iterative algorithm for solving the problem (SFFPEP) for the class of quasi-nonexpansive mappings in Hilbert spaces. Furthermore, we study the convergence of the proposed algorithm. At the end, we give  numerical example that illustrate our theoretical result. The SFFPEP is a generalization of the split feasibility problem (SFP),  split feasibility and fixed point problems (SFFPP) and split equality fixed point problem (SEFPP). 
\end{abstract}
\maketitle
\section{Introduction}
The split feasibility problem (SFP) in finite-dimensional Hilbert space was first introduced in 1994 by Censor and Elfving \cite{4}, this problem is useful to some area of applied mathematics, such as in convex optimization, image recovery, etc.   
Recently, it was found that the SFP can also be applied to study intensity-modulated radiation therapy; see, for example, \cite{5,6,7} and the references therein. For many years, a wide variety of iterative methods has been used to approximate the solution of SFP, for example, see \cite{9,10,11,12} and references therein.

The SFP is formulated as follows:
\begin{equation}
{\rm~Find~}x^{*}\in C {\rm~such~ that~} y^{*}\in Q,\label{1.1}
\end{equation}
where $C$ and $Q$ are nonempty closed convex subset of Hilbert space $H_{1}$ and  $H_{2},$ respectively, and $A:H_{1}\to H_{2}$ is a bounded linear operator.

The split feasibility and fixed point problems (SFFPP) is required to find a vector 
\begin{equation}
x^{*}\in C\cap Fix(U) {\rm~such~ that~} Ax^{*}\in Q\cap Fix(T),\label{1.2}
\end{equation}
where $U:H_{1}\to H_{1}$ and $T:H_{2}\to H_{2}$ are two nonlinear mappings, and $A:H_{1}\to H_{2}$ is a bounded linear operator.
  It is easy to see that  Problem (\ref{1.2}) reduces to the Problem (\ref{1.1}) as
$C:=Fix(U)$ and $Q:=Fix(T).$ Therefore, it is worth to mentioned here that  Problem (\ref{1.2})  generalizes
Problem (\ref{1.1}).

The split equality fixed point problems (SEFPP) was  introduced  by Moudafi  \cite{1} and it takes the following form:
\begin{equation}
{\rm~Find~} x^{*}\in C {\rm~and~} y^{*}\in Q {\rm~such~ that~} Ax^{*}=By^{*}.\label{1.3}
\end{equation}
where $A:H_{1}\to H_{3}$ and $B:H_{2}\to H_{3}$  are two bounded linear operators, $C$ and $Q$ be a nonempty closed convex subset of $H_{1}$ and  $H_{2},$ respectively. It is easy to see that  Problem (\ref{1.3}) reduces to  Problem (\ref{1.1}) as $H_{2}=H_{3}$ and $B=I$ ($I$ is the identity operator on $H_{2}$) in (\ref{1.3}). Therefore
 Problem (\ref{1.3}) proposed by Moudafi  \cite{1} is a generalization of  Problem (\ref{1.1}).

We now introduce a new problem called the split feasibility and fixed point equality problems (SFFPEP), this is fomulated as:
\begin{equation}
{\rm~Find~} x^{*}\in C\cap Fix(U) {\rm~and~} y^{*}\in Q\cap Fix(T) {\rm~such~ that~} Ax^{*}=By^{*},\label{1.4}
\end{equation}
where $U:H_{1}\to H_{1}$ and  $T:H_{2}\to H_{2}$ are  two quasi-nonexpansive mappings with  $Fix(U)\neq\emptyset$ and
$Fix(T)\neq\emptyset,$  $A:H_{1}\to H_{3}$ and $B:H_{2}\to H_{3}$  are two bounded linear operators, $C$ and $Q$ are two nonempty closed convex subset of $H_{1}$ and  $H_{2},$ respectively.

Note that if $C:=Fix(U)$ and $Q:=Fix(T),$ then, Problem (\ref{1.4}) reduces to Problem (\ref{1.3}) and also reduces to Problem (\ref{1.2}) as $H_{2}=H_{3}$ and $B=I$ ($I$ stands for the identity operator on $H_{2}$) in (\ref{1.4}). In the light of this, it worth to mention here that the SFFPEP generalizes the SFP,  SFFPP and SEFPP. Therefore, the results and
conclusions that are true for the SFFPEP  continue to holds for these problems (SFP, SFFPP and SEFPP)
and it  definitely shows the significance and the range of applicability of SFFPEP.

In order to approximate the solution of SEFPP (\ref{1.2}), Moudafi and Al-Shemas \cite{2} introduced the  following  simultaneous iterative methods which generate a sequences $\{x_{n}\}$ and $\{y_{n}\}$ by

\begin{equation}
 {} \left\{ \begin{array}{ll}  
x_{n+1}=U(x_{n}-\lambda_{n} A^{*}(Ax_{n}-By_{n}),\\
\\ y_{n+1}=T(y_{n}+\lambda_{n} B^{*}(Ax_{n}-By_{n}), \forall n\geq 1, & \textrm{ $  $}
 \end{array}  \right.\label{1.5}
\end{equation}
where $U:H_{1}\to H_{1},$  $T:H_{2}\to H_{2}$ are two firmly quasi-nonexpansive mappings, $A:H_{1}\to H_{3},$   $B:H_{2}\to H_{3}$ are two bounded linear operators with their adjoints  $A^{*}$ and $B^{*},$ respectively, $\lambda_{n}\subset\left(\epsilon, \frac{2}{L_{A^{*}A}L_{B^{*}B}}\right),$ $L_{A^{*}A}$ and $L_{B^{*}B}$ denote the spectral radius of the operators $A^{*}A$ and $B^{*}B,$ respectively.

Noticing  that projection operators have very attractive properties that make them
particularly well suited for iterative algorithms, for example, see \cite{3}. By setting $U=P_{C}$ and $T=P_{Q},$  where $P_{C}$ and $P_{Q}$ denote the metric projection of $H_{1}$ and $H_{2}$ onto $C$ and $Q,$ respectively.  Trivially, Algorithm (\ref{1.5}) reduces to the following simultaneous iterative method:
\begin{equation}
 {} \left\{ \begin{array}{ll} 
x_{n+1}=P_{C}(x_{n}-\lambda_{n} A^{*}(Ax_{n}-By_{n}),\\
\\ y_{n+1}=P_{Q}(y_{n}+\lambda_{n} B^{*}(Ax_{n}-By_{n}), \forall n\geq 1, & \textrm{ $  $}
 \end{array}  \right.\label{k}
\end{equation}
this algorithm was investigated in \cite{13} by means of the projected Landweber's algorithm. We already mentioned that if $B=I,$ Problem (\ref{1.3}) reduces to the classical SFP (\ref{1.1}), and if in addition, $\lambda_{n}=\lambda=1,$ the second equation of Algorithm (\ref{k}) reduces to  $y_{n+1}=P_{Q}(Ax_{n})$ while the  first equation gives the following algorithm: 
\begin{equation}
 x_{n+1}=P_{C}(x_{n}-\lambda A^{*}(I-P_{Q})Ax_{n}),\label{r}
 \end{equation}
Algorithm (\ref{r}) is   exactly the algorithm proposed by Byrne  for more details, see \cite{9} and reference therein.
 
Very recently, Yuan et al., \cite{15}, modified the algorithm of   Moudafi and Al-Shemas \cite{2} and considered the following algorithm: 

\begin{equation}
 {} \left\{ \begin{array}{ll}  
x_{n+1}=(1-\alpha_{n})x_{n}+ \alpha_{n}U(x_{n}-\lambda_{n} A^{*}(Ax_{n}-By_{n}),\\
\\ y_{n+1}=(1-\alpha_{n})y_{n}+ \alpha_{n}T(y_{n}+\lambda_{n} B^{*}(Ax_{n}-By_{n}), \forall n\geq 1, & \textrm{ $  $}
 \end{array}  \right.\label{1.5b}
\end{equation}
where $U,T,$  $A,A^{*},$ $B, B^{*},$ $\lambda_{n},$ $L_{A^{*}A}$ and $L_{B^{*}B}$  as in Algorithm (\ref{1.5}), and $\alpha_{n}\subset[\alpha,1]$ for $\alpha>0.$ By imposing some appropriate  conditions on parameters and the operators involved, they proved a weak convergence result and they also obtained strong convergence result by imposing semicomfactness conditions.
 
In 2015, Chidume et al., \cite{14}  modified Algorithm (\ref{1.5b}) and  considered the following algorithm: 
\begin{equation}
 {} \left\{ \begin{array}{ll} u_{n}= x_{n}-\lambda_{n} A^{*}(Ax_{n}-By_{n})
\\x_{n+1}=(1-\alpha)u_{n}+ \alpha Uu_{n},\\
\\r_{n}=y_{n}+\lambda_{n} B^{*}(Ax_{n}-By_{n})
\\ y_{n+1}=(1-\alpha)r_{n}+ \alpha  Tr_{n}, \forall n\geq 1, & \textrm{ $  $}
 \end{array}  \right.\label{1.5ab}
\end{equation}
where $U,T,$ are two demicontractive mappings, $A,A^{*},$ $B, B^{*},$ $\lambda_{n},$ $L_{A^{*}A}$ and $L_{B^{*}B}$ as in Algorithm (\ref{1.5b}), and $\alpha\in (0,1)$.  Under some appropriate conditions, they also proved a weak convergence result and  strong convergence follows only if  $U,$ and $T$ are semi-compacts.

To solve Problem (\ref{1.2}), Chen et al.,\cite{8} introduced the following Ishikawa extra-gradient iterative methods which generate a sequence $\{x_{n}\}$ by: 
\begin{equation}
 {} \left\{ \begin{array}{ll} x_{0}\in C~ {\rm ~chosen~arbitrarily,}
\\ y_{n}=P_{C}(x_{n}-\lambda_{n} A^{*}(I-UP_{Q})Ax_{n}),
\\ z_{n}=P_{C}(x_{n}-\lambda_{n} A^{*}(I-UP_{Q})Ay_{n}),
\\ w_{n}=(1-\beta_{n})z_{n}+\beta_{n}Tz_{n},
\\ x_{n+1}=(1-\alpha_{n})z_{n}+\alpha_{n}Tw_{n}, \forall n\geq 0, & \textrm{ $  $}
 \end{array}  \right.\label{chen}
\end{equation}
where $\lambda_{n}\subset (0,\frac{1}{2\|A\|^{2}})$ and $\beta_{n},\alpha_{n}\subset (0,1)$ such that $0<a<\beta_{n}<c<\alpha_{n}<\frac{1}{\sqrt{1+L^{2}+1}},$ $U$ is a nonexpansive mapping and  $T$ is L-Lipschitzian pseudocontractive mapping.

Motivated and inspired by the work of; Moudafi \cite{1}, Moudafi and Al-Shemas \cite{2},  Chen et al., \cite{8},  Byrne \cite{9}, Yuan et al., \cite{15} and Chidume et al., \cite{14},    we further propose the following algorithm to solve  the split feasibility and fixed point equality problems (\ref{1.4}) in the case where $U$ and $T$ are quasi-nonexpansive mappings.
\begin{equation}
 {} \left\{ \begin{array}{ll} x_{1}\in H_{1} {\rm ~and~}  x_{2}\in H_{2};
\\z_{n}=P_{C}(x_{n}-\lambda_{n} A^{*}(Ax_{n}-By_{n})),
\\ w_{n}=(1-\beta_{n})z_{n}+\beta_{n}U(z_{n}),
\\ x_{n+1}=(1-\alpha_{n})z_{n}+\alpha_{n}U(w_{n}),
\\
\\u_{n}=P_{Q}(y_{n}+\lambda_{n} B^{*}(Ax_{n}-By_{n})),
\\ r_{n}=(1-\beta_{n})u_{n}+\beta_{n}T(u_{n}),
\\ y_{n+1}=(1-\alpha_{n})u_{n}+\alpha_{n}T(r_{n}),  \forall n\geq 1, & \textrm{ $  $}
 \end{array}  \right.
\end{equation}
where $0<a<\beta_{n}<1,$ $0<b<\alpha_{n}<1,$ and $\lambda_{n}\in\left(0, \frac{2}{L_{1}+L_{2}}\right),$ where $L_{1}$ and $L_{2}$ denote the spectral radius of the operators $A^{*}A$ and $B^{*}B,$ respectively.

It is important to know that the class of quasi-nonexpansive mapping generalizes the class of firmly  quasi-nonexpansive mappings studied by   Moudafi and Al-Shemas \cite{2}. Under some appropriate  conditions  imposed on the parameters and operators  involved, we proved a weak convergence results of the proposed algorithms. Furthermore, we gave numerical example that illustrate our theoretical results. The results presented in this paper, improve, extend and generalize a number of well-known results annouced.

\section{Preliminaries}
In this section, we present some definitions and lemmas  which will be use in proving our main result.

Let $H$ be a Hilbert space and  $T:H\to H$ be a map with $Fix(T)=\{x\in H: Tx=x\}\neq\emptyset.$ $T$  is said to be;
 nonexpansive, if 
$$\left\|Tx-Ty\right\|\leq \left\|x-y\right\|, \forall x,y\in H,$$
 quasi-nonexpansive, if 
\begin{equation}
\left\|Tx-q\right\|\leq \left\|x-q\right\|, \forall x\in H {\rm~and~} q\in Fix(T),\nonumber
\end{equation}
 firmly quasi-nonexpansive, if
\begin{equation}
\left\|Tx-q\right\|^{2}\leq\left\|x-q\right\|^{2}-\left\|Tx-x\right\|^{2}, \forall x\in H {\rm~and~} q\in Fix(T).\nonumber
\end{equation}
And also $T$ is said to be  demiclosed at 0, if for any sequence $\{x_{n}\}$ in $H$ such that  $x_{n}$ converges weakly to $x$ and  $Tx_{n}$ converges strongly to 0, then it implies that $Tx=0.$ And it is said to be semi-compact, if for any bounded sequence $\{x_{n}\}\subset H$ with $(I-T)x_{n}$ converges strongly  to 0,  there exists a sub-sequence say  $\{x_{n_{k}}\}$ of $\{x_{n}\}$ such that $\{x_{n_{k}}\}$ converges strongly to 0.

\begin{lemma}\label{opial}{\rm(Opial~ \cite{17})} Let $H$ be a real Hilbert space and $\{x_{n}\}$ be a
sequence in $H$ such that there exists a nonempty set $C\subset H$ such that the following 
conditions are satisfied:
\begin{enumerate}
\item[(i)] For each $x\in C$, $\underset{n\to\infty}{\lim}{\|x_{n}-x\|}$ exists,
\item[(ii)] Any weak-cluster point of the sequence $\{x_{n}\}$ belongs to $C.$
\end{enumerate}
Then, there exists $y\in C$ such that $\{x_{n}\}$  converges weakly to $y.$
\end{lemma}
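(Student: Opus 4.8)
The plan is to prove this in four stages: boundedness of $\{x_n\}$, nonemptiness of the set of weak-cluster points, uniqueness of the weak-cluster point, and finally the passage from ``a single weak-cluster point'' to genuine weak convergence. First I would fix some $x_0\in C$ (possible since $C\neq\emptyset$). By hypothesis (i) the scalar sequence $\{\|x_n-x_0\|\}$ converges, hence is bounded, so $\{x_n\}$ is a bounded sequence in $H$. Since $H$ is a Hilbert space, closed balls are weakly compact (equivalently, by reflexivity together with the Eberlein--\v{S}mulian theorem, every bounded sequence has a weakly convergent subsequence), so the set $W$ of weak-cluster points of $\{x_n\}$ is nonempty, and by (ii) we have $\emptyset\neq W\subseteq C$.

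The heart of the argument is to show that $W$ is a singleton. Suppose $p,q\in W$; then $p,q\in C$, so by (i) both $\lim_{n\to\infty}\|x_n-p\|$ and $\lim_{n\to\infty}\|x_n-q\|$ exist. Expanding inner products gives, for every $n$, $\|x_n-p\|^2-\|x_n-q\|^2 = 2\langle x_n,\,q-p\rangle + \|p\|^2-\|q\|^2$; since the left-hand side converges, the sequence $\{\langle x_n,\,q-p\rangle\}$ converges to some limit $\ell$. Choosing a subsequence $x_{n_k}\rightharpoonup p$ gives $\ell=\langle p,\,q-p\rangle$, while choosing a subsequence $x_{m_j}\rightharpoonup q$ gives $\ell=\langle q,\,q-p\rangle$; subtracting yields $\langle q-p,\,q-p\rangle=0$, so $p=q$. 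Hence $W=\{y\}$ for a unique $y\in C$.

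Finally I would upgrade this to weak convergence of the whole sequence. If $\{x_n\}$ did not converge weakly to $y$, there would exist $z\in H$, $\varepsilon>0$, and a subsequence $\{x_{n_k}\}$ with $|\langle x_{n_k}-y,\,z\rangle|\geq\varepsilon$ for all $k$. Being bounded, this subsequence would have a further subsequence converging weakly to some point, which would then be a weak-cluster point of $\{x_n\}$ and hence equal to $y$ by uniqueness --- contradicting $|\langle x_{n_k}-y,\,z\rangle|\geq\varepsilon$. Therefore $x_n\rightharpoonup y$ with $y\in C$, as claimed.

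I expect the uniqueness step to be the only place requiring genuine care: one must correctly exploit the fact that the limits in (i) hold at \emph{every} point of $C$ (in particular at two putative distinct weak-cluster points) and combine this with weak subsequential convergence through the inner-product identity. The boundedness argument and the final contradiction argument are routine consequences of weak compactness of bounded sets in a Hilbert space.
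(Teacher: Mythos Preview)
Your argument is correct in every step: boundedness from (i), existence of weak-cluster points by reflexivity/Eberlein--\v{S}mulian, uniqueness via the identity $\|x_n-p\|^2-\|x_n-q\|^2=2\langle x_n,q-p\rangle+\|p\|^2-\|q\|^2$, and the final sub-subsequence contradiction. There is nothing to compare against, however: the paper states this lemma as a preliminary tool attributed to Opial and provides no proof of its own, so your write-up supplies what the paper simply cites. As a minor remark, the more common textbook route to uniqueness invokes Opial's property directly (if $x_{n_k}\rightharpoonup p$ and $q\neq p$ then $\liminf\|x_{n_k}-p\|<\liminf\|x_{n_k}-q\|$, which together with (i) and symmetry forces a contradiction); your inner-product expansion is an equivalent and equally clean alternative.
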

In sequel, adopt the following notations: 
\begin{itemize} 
\item [(i)]$I:$ The identity operator on a Hilbert space $H,$
\item [(ii)]$Fix(T):$  The fixed point set of $T$ i.e., $Fix(T)=\{x\in H: Tx=x\},$ 
\item[(iii)] $"\rightarrow "$and $"\rightharpoonup"$ The strong and weak covergence, respectively,
\item [(iv)]$\omega_{\omega}(x_{n}):$ The set of the cluster point of $\{x_{n}\}$ in the weak topology i.e., $\{{\rm~there~ exists~} \{x_{n_{k}}\}$ of $\{x_{n}\}$  such that $x_{n_{k}}\rightharpoonup x\},$ 
\item [(v)]$\Omega:$ The  solution set of  Problem (\ref{1.4}), i.e., 
\begin{equation}
\Omega=\Big\{{\rm~Find~} x^{*}\in C\cap Fix(U) {\rm~and~} y^{*}\in Q\cap Fix(T) {\rm~such~ that~} Ax^{*}=By^{*}\Big\}.\label{2.2}
\end{equation}
 \end{itemize}
\section{Main Results}
To approximate the solution of  split feasibility and fixed point equality problems (\ref{2.2}), we make the following assumptions:
\begin{enumerate}
\item [($B_{1}$)]$H_{1},$ $H_{2},$ $H_{3},$ are real Hilbert spaces, $C$ and $Q$ are two nonempty closed convex subset of $H_{1}$ and  $H_{2},$ respectively. 
\item [($B_{2}$)] $U:H_{1}\to H_{1}$ and  $T:H_{2}\to H_{2}$ are two quasi-nonexpansive mappings with  $Fix(U)\neq\emptyset$ and
$Fix(T)\neq\emptyset.$ 
\item [($B_{3}$)]  $A:H_{1}\to H_{3}$ and $B:H_{2}\to H_{3}$  are two bounded linear operators with their adjoints $A^{*}$ and $B^{*},$ respectively.
\item [($B_{4}$)] $(U-I)$ and $(T-I)$ are demiclosed at zero.
\item [($B_{5}$)] $P_{C}$ and $P_{Q}$ are metric projection of $H_{1}$ and  $H_{2}$ onto $C$ and $Q,$ respectively.
\item [($B_{6}$)] For arbitrary $x_{1}\in H_{1}$ and $ y_{1}\in H_{2},$ define a sequence $\{(x_{n}, y_{n})\}$ by:
\end{enumerate}

\begin{equation}
 {} \left\{ \begin{array}{ll} 
z_{n}=P_{C}(x_{n}-\lambda_{n} A^{*}(Ax_{n}-By_{n})),
\\ w_{n}=(1-\beta_{n})z_{n}+\beta_{n}U(z_{n}),
\\ x_{n+1}=(1-\alpha_{n})z_{n}+\alpha_{n}U(w_{n}),
\\
\\u_{n}=P_{Q}(y_{n}+\lambda_{n} B^{*}(Ax_{n}-By_{n})),
\\ r_{n}=(1-\beta_{n})u_{n}+\beta_{n}T(u_{n}),
\\ y_{n+1}=(1-\alpha_{n})u_{n}+\alpha_{n}T(r_{n}),  \forall n\geq 1. & \textrm{ $  $}
 \end{array}  \right.\label{BUL}
\end{equation}
where $0<a<\beta_{n}<1,$ $0<b<\alpha_{n}<1,$ and $\lambda_{n}\in\left(0, \frac{2}{L_{1}+L_{2}}\right),$ where $L_{1}$ and $L_{2}$ denote the spectral radius of the operators $A^{*}A$ and $B^{*}B,$ respectively.

We are now in the position to state and prove our main result.
\begin{theorem}\label{T1} Suppose that  assumptions $(B_{1})-(B_{6})$ are satisfied, in addition assume that the solution set $\Omega\neq\emptyset.$  Then, the sequence $\{(x_{n}, y_{n})\}$ generated by Algorithm (\ref{BUL}) converges weakly to $(x^{*}, y^{*})\Omega.$
\end{theorem}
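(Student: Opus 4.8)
The plan is to regard the iteration as a single sequence $\xi_n=(x_n,y_n)$ in the product Hilbert space $\mathcal{H}=H_1\times H_2$ with inner product $\langle(a,b),(c,d)\rangle=\langle a,c\rangle+\langle b,d\rangle$, and to verify the two hypotheses of Opial's Lemma~\ref{opial} relative to the solution set $\Omega\subset\mathcal{H}$: that $\lim_{n\to\infty}\|\xi_n-(p,q)\|$ exists for every $(p,q)\in\Omega$, and that every weak cluster point of $\{\xi_n\}$ belongs to $\Omega$. Once both hold, Lemma~\ref{opial} produces a single weak limit in $\Omega$, which is the claim (recall that $(x_{n_k},y_{n_k})\rightharpoonup(x^*,y^*)$ in $\mathcal{H}$ amounts to $x_{n_k}\rightharpoonup x^*$ in $H_1$ and $y_{n_k}\rightharpoonup y^*$ in $H_2$).

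For the first hypothesis I would fix $(p,q)\in\Omega$, so $p\in C\cap Fix(U)$, $q\in Q\cap Fix(T)$ and $Ap=Bq$, and set $s_n=x_n-\lambda_nA^*(Ax_n-By_n)$, $t_n=y_n+\lambda_nB^*(Ax_n-By_n)$. Firm nonexpansiveness of $P_C$ and $P_Q$ gives $\|z_n-p\|^2\le\|s_n-p\|^2-\|s_n-z_n\|^2$ and $\|u_n-q\|^2\le\|t_n-q\|^2-\|t_n-u_n\|^2$; expanding the squares, using $Ap=Bq$ to merge the two cross terms into $-2\lambda_n\|Ax_n-By_n\|^2$, and bounding $\|A^*(Ax_n-By_n)\|^2\le L_1\|Ax_n-By_n\|^2$, $\|B^*(Ax_n-By_n)\|^2\le L_2\|Ax_n-By_n\|^2$, yields
\begin{equation}
\|z_n-p\|^2+\|u_n-q\|^2\le\|x_n-p\|^2+\|y_n-q\|^2-\lambda_n\bigl(2-\lambda_n(L_1+L_2)\bigr)\|Ax_n-By_n\|^2-\|s_n-z_n\|^2-\|t_n-u_n\|^2.\nonumber
\end{equation}
Then, applying the identity $\|(1-\gamma)a+\gamma b\|^2=(1-\gamma)\|a\|^2+\gamma\|b\|^2-\gamma(1-\gamma)\|a-b\|^2$ to the $w_n$- and $x_{n+1}$-steps with quasi-nonexpansiveness of $U$ at $p$ gives $\|x_{n+1}-p\|^2\le\|z_n-p\|^2-\alpha_n\beta_n(1-\beta_n)\|z_n-Uz_n\|^2-\alpha_n(1-\alpha_n)\|z_n-Uw_n\|^2$, and symmetrically $\|y_{n+1}-q\|^2\le\|u_n-q\|^2-\alpha_n\beta_n(1-\beta_n)\|u_n-Tu_n\|^2-\alpha_n(1-\alpha_n)\|u_n-Tr_n\|^2$. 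Combining with the display above, $\Gamma_n:=\|x_n-p\|^2+\|y_n-q\|^2$ satisfies $\Gamma_{n+1}\le\Gamma_n-\theta_n$ with $\theta_n\ge0$; hence $\{\Gamma_n\}$ is nonincreasing and bounded below, so it converges, giving condition (i) and the boundedness of $\{\xi_n\}$.

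For the second hypothesis I would sum $\Gamma_{n+1}\le\Gamma_n-\theta_n$ to get $\sum_n\theta_n<\infty$, so every nonnegative summand tends to $0$: using that $\lambda_n$ is bounded away from $0$ and $\tfrac{2}{L_1+L_2}$ and that $\alpha_n,\beta_n$ are bounded away from $0$ and $1$, this forces $\|Ax_n-By_n\|\to0$, $\|s_n-z_n\|\to0$, $\|t_n-u_n\|\to0$, $\|z_n-Uz_n\|\to0$, $\|u_n-Tu_n\|\to0$, and then $\|x_n-z_n\|\to0$, $\|y_n-u_n\|\to0$ because $A^*,B^*$ are bounded. Taking $(x^*,y^*)\in\omega_\omega(\xi_n)$ with $x_{n_k}\rightharpoonup x^*$, $y_{n_k}\rightharpoonup y^*$, I get $z_{n_k}\rightharpoonup x^*$, $u_{n_k}\rightharpoonup y^*$; weak closedness of $C$ and $Q$ gives $x^*\in C$, $y^*\in Q$; demiclosedness at $0$ of $U-I$ and $T-I$ (assumption $(B_4)$), applied to $z_{n_k}\rightharpoonup x^*$ with $(U-I)z_{n_k}\to0$ and to $u_{n_k}\rightharpoonup y^*$ with $(T-I)u_{n_k}\to0$, gives $x^*\in Fix(U)$, $y^*\in Fix(T)$; and weak-to-weak continuity of $A$, $B$ together with $\|Ax_n-By_n\|\to0$ forces $Ax^*=By^*$. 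Hence $(x^*,y^*)\in\Omega$, which is condition (ii), and Lemma~\ref{opial} concludes.

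The main obstacle I anticipate is bookkeeping rather than conceptual: carrying the squared-norm expansions through so that the cross terms cancel cleanly via $Ap=Bq$ and the coefficient of $\|Ax_n-By_n\|^2$ comes out as $-\lambda_n(2-\lambda_n(L_1+L_2))\le0$, and making sure the parameter hypotheses are strong enough for the residual quantities to converge strongly to zero — this requires $\lambda_n$ bounded away from both endpoints of $\left(0,\tfrac{2}{L_1+L_2}\right)$ and $\alpha_n,\beta_n$ bounded away from $1$ as well as from $0$, so the conditions as written should be read with these standard strengthenings. The demiclosedness input $(B_4)$ is precisely what lets a weak cluster point land in $Fix(U)\times Fix(T)$, so no compactness is needed for the weak convergence conclusion.
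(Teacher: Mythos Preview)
Your proposal is correct and follows essentially the same architecture as the paper: verify the two hypotheses of Lemma~\ref{opial} for the sequence $(x_n,y_n)$ in $H_1\times H_2$ by showing that $\Gamma_n=\|x_n-p\|^2+\|y_n-q\|^2$ is nonincreasing, extracting the residuals $\|Ax_n-By_n\|\to0$, $\|Uz_n-z_n\|\to0$, $\|Tu_n-u_n\|\to0$, and then using demiclosedness and weak lower semicontinuity of the norm to place every weak cluster point in $\Omega$.

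The one substantive difference is your use of \emph{firm} nonexpansiveness of $P_C$ and $P_Q$ to retain the extra terms $\|s_n-z_n\|^2$ and $\|t_n-u_n\|^2$; the paper uses only nonexpansiveness and drops these. Your choice buys you $\|x_n-z_n\|\to0$ and $\|y_n-u_n\|\to0$ directly, so that $z_{n_k}\rightharpoonup x^*$ and $u_{n_k}\rightharpoonup y^*$ follow immediately from $x_{n_k}\rightharpoonup x^*$, $y_{n_k}\rightharpoonup y^*$, and $x^*\in C$, $y^*\in Q$ follow from weak closedness. The paper instead argues that $z_n=P_C(x_n-\lambda_nA^*(Ax_n-By_n))\rightharpoonup P_Cx$, implicitly treating $P_C$ as weak-to-weak continuous, which is not valid in general Hilbert spaces; your route is the more rigorous one at this step. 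Your remark that the hypotheses must be read with $\lambda_n$ bounded away from both endpoints and $\beta_n$ (and $\alpha_n$) bounded away from $1$ is also well taken---the paper uses exactly these facts when passing from the summability of $\theta_n$ to the vanishing of the individual residuals, even though the stated conditions $(B_6)$ do not literally guarantee them.
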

\begin{proof}
Let $(x^{*}, y^{*})\in\Omega,$ by (\ref{BUL}), we have 
\begin{align}
\left\|x_{n+1}-x^{*}\right\|^{2}&=\left\|(1-\alpha_{n})(z_{n}-x^{*})+\alpha_{n}(Uw_{n}-x^{*})\right\|^{2}\nonumber
\\=&(1-\alpha_{n})\left\|z_{n}-x^{*}\right\|^{2}+\alpha_{n}\left\|Uw_{n}-x^{*}\right\|^{2}-\alpha_{n}(1-\alpha_{n})\left\|Uw_{n}-z_{n}\right\|^{2}\nonumber
\\\leq& (1-\alpha_{n})\left\|z_{n}-x^{*}\right\|^{2} +\alpha_{n}\left\|w_{n}-x^{*}\right\|^{2}-\alpha_{n}(1-\alpha_{n})\left\|Uw_{n}-z_{n}\right\|^{2}.\label{a}
\end{align}
On the other hand,
\begin{align}
\left\|w_{n}-x^{*}\right\|^{2}&=\left\|(1-\beta_{n})(z_{n}-x^{*})+\beta_{n}(Uz_{n}-x^{*})\right\|^{2}\nonumber
\\=&(1-\beta_{n})\left\|z_{n}-x^{*}\right\|^{2}+\beta_{n}\left\|Uz_{n}-x^{*}\right\|^{2}-\beta_{n}(1-\beta_{n})\left\|Uz_{n}-z_{n}\right\|^{2}\nonumber
\\\leq& \left\|z_{n}-x^{*}\right\|^{2}-\beta_{n}(1-\beta_{n})\left\|Uz_{n}-z_{n}\right\|^{2},\label{b}
\end{align}
and 
\begin{align}
\left\|z_{n}-x^{*}\right\|^{2}&=\left\|P_{C}(x_{n}-\lambda_{n} A^{*}(Ax_{n}-By_{n}))-P_{C}(x^{*})\right\|^{2}\nonumber
\\&\leq \left\|x_{n}-\lambda_{n} A^{*}(Ax_{n}-By_{n})-x^{*}\right\|^{2}\nonumber
\\&= \left\|x_{n}-x^{*}\right\|^{2}-2\lambda_{n}\left\langle  Ax_{n}-Ax^{*}, Ax_{n}-By_{n}\right\rangle+\lambda_{n}^{2}L_{1}\left\|Ax_{n}-By_{n}\right\|^{2}\label{c}
\end{align}
From $(\ref{a})-(\ref{c}),$ we obtain that
\begin{align}
\left\|x_{n+1}-x^{*}\right\|^{2}&\leq\left\|x_{n}-x^{*}\right\|^{2}-2\lambda_{n}\left\langle  Ax_{n}-Ax^{*}, Ax_{n}-By_{n}\right\rangle+\lambda_{n}^{2}L_{1}\left\|Ax_{n}-By_{n}\right\|^{2}\nonumber
\\&-\alpha_{n}\beta_{n}(1-\beta_{n})\left\|U(z_{n})-z_{n}\right\|^{2}-\alpha_{n}(1-\alpha_{n})\left\|Uw_{n}-z_{n}\right\|^{2}.\label{d}
\end{align}
Similarly, the second equation of Equation (\ref{BUL}) gives
\begin{align}
\left\|y_{n+1}-y^{*}\right\|^{2}&\leq\left\|y_{n}-y^{*}\right\|^{2}+2\lambda_{n}\left\langle  By_{n}-By^{*}, Ax_{n}-By_{n}\right\rangle+\lambda_{n}^{2}L_{2}\left\|Ax_{n}-By_{n}\right\|^{2}\nonumber
\\&-\alpha_{n}\beta_{n}(1-\beta_{n})\left\|T(u_{n})-u_{n}\right\|^{2}-\alpha_{n}(1-\alpha_{n})\left\|Tr_{n}-u_{n}\right\|^{2}.\label{e}
\end{align}
By (\ref{d}), (\ref{e}) and noticing  that $Ax^{*} = By^{*},$ we deduce that
\begin{align}
\left\|x_{n+1}-x^{*}\right\|^{2}+\left\|y_{n+1}-y^{*}\right\|^{2}&\leq \left\|x_{n}-x^{*}\right\|^{2}+\left\|y_{n}-y^{*}\right\|^{2}-2\lambda_{n}\left\|Ax_{n}-By_{n}\right\|^{2}\nonumber
\\&+\lambda_{n}^{2}(L_{1}+L_{2})\left\|Ax_{n}-By_{n}\right\|^{2}\nonumber
\\&-\alpha_{n}\beta_{n}(1-\beta_{n})\left\|U(z_{n})-z_{n}\right\|^{2}\nonumber
\\&-\alpha_{n}\beta_{n}(1-\beta_{n})\left\|T(u_{n})-u_{n}\right\|^{2}.\label{f}
\end{align}
Thus, we deduce that
\begin{align}
\Omega_{n+1}&\leq \Omega_{n}-\lambda_{n}\left(2-\lambda_{n}^{2}(L_{1}+L_{2})\right)\left\|Ax_{n}-By_{n}\right\|^{2}\nonumber
\\&-\alpha_{n}\beta_{n}(1-\beta_{n})\left\|U(z_{n})-z_{n}\right\|^{2}-\alpha_{n}\beta_{n}(1-\beta_{n})\left\|T(u_{n})-u_{n}\right\|^{2},\label{g}
\end{align}
where $$\Omega_{n}:=\left\|x_{n}-x^{*}\right\|^{2}+\left\|y_{n}-y^{*}\right\|^{2}.$$
Thus, $\{\Omega_{n}\}$ is a non-increasing sequence and bounded below by 0, therefore, it converges.

From (\ref{g}) and the fact that $\{\Omega_{n}\}$ converges, we deduce that
\begin{align}
\underset{n\to\infty}{\lim}\left\|Ax_{n}-By_{n}\right\|=0,\label{q}
\end{align}
\begin{align}
\underset{n\to\infty}{\lim}\left\|Uz_{n}-z_{n}\right\|=0 {\rm~and~} \underset{n\to\infty}{\lim}\left\|Tu_{n}-u_{n}\right\|=0\label{K}. 
\end{align}

Furthermore, since $\{\Omega_{n}\}$ converges, this ensures that  $\{x_{n}\}$ and $\{y_{n}\}$  also converges.\\ 

Now, let $(x,y)\in\Omega,$ this implies that $x\in C\cap Fix(U)$ and $y\in Q\cap Fix(T)$ such that $Ax=By.$ 

 The fact that $x_{n}\rightharpoonup x$ and $\underset{n\to\infty}{\lim}\left\|Ax_{n}-By_{n}\right\|=0$ together with $$z_{n}=P_{C}(x_{n}-\lambda_{n} A^{*}(Ax_{n}-By_{n})),$$
we deduce that   $z_{n}\rightharpoonup P_{C}x.$  Since $x\in C,$ by projection theorem, we obtain that $P_{C}x=x.$ Hence,  $z_{n}\rightharpoonup x.$

Similarly, The fact that $y_{n}\rightharpoonup y$ and $\underset{n\to\infty}{\lim}\left\|Ax_{n}-By_{n}\right\|=0$ together with 
$$u_{n}=P_{Q}(y_{n}+\lambda_{n} B^{*}(Ax_{n}-By_{n})),$$
we deduce that   $u_{n}\rightharpoonup P_{Q}x.$  Since $x\in Q,$ by projection theorem, we obtain that $P_{Q}y=y.$ Hence,  $u_{n}\rightharpoonup y.$



Now, $z_{n}\rightharpoonup x$ and $\underset{n\to\infty}{\lim}\left\|Uz_{n}-z_{n}\right\|=0$ together with the demiclosed of $(U-I)$ at zero, we deduce that $x\in Fix(U)$ which implies that $x\in Fix(U).$ 

On the other hand, $u_{n}\rightharpoonup y$ and $\underset{n\to\infty}{\lim}\left\|Tu_{n}-u_{n}\right\|=0$ together with the demiclosed of $(T-I)$ at zero, we deduce that $y\in Fix(T)$ which implies that $y\in Fix(T).$  

Since $z_{n}\rightharpoonup x,$  $u_{n}\rightharpoonup y$ and the fact that $A$ and $B$ are bounded linear operators, we have 
$$Az_{n}\rightharpoonup Ax {\rm~~and~~} Bu_{n}\rightharpoonup By,$$
This implies that 
$$Az_{n}-Bu_{n}\rightharpoonup Ax-By,$$
which turn to implies that 
$$\left\|Ax-By\right\|\leq\underset{n\to\infty}{\liminf}\left\|Az_{n}-Bu_{n}\right\|=0,$$
which further implies that $Ax=By.$ Noticing that $x\in C,$  $x\in Fix(U),$ $y\in Q$ and $y\in Fix(T)$, we have that $x\in C\cap Fix(U)$ and $y\in Q\cap Fix(T).$ Hence, we conclude that $(x,y)\in \Omega.$ 

Summing up, we have proved that:

\begin{enumerate}
\item[(i)] for each $(x^{*}, x^{*})\in\Omega,$   the $\underset{n\to\infty}{\lim}\left(\left\|x_{n}-x^{*}\right\|^{2}+\left\|y_{n}-y^{*}\right\|^{2}\right)$ exists;
\item[(ii)] each weak cluster of the sequence $(x_{n}, y_{n})$ belongs to $\Omega.$
\end{enumerate}
Thus, by Lemma (\ref{opial}) we conclude that the  sequences $(x_{n}, y_{n})$ converges weakly to $(x^{*}, x^{*})\in\Omega.$ And the proof is complete.
 \end{proof}

\begin{theorem}\label{T2}Suppose that all the hypothesis of Theorem \ref{T1} are satisfied and in addition,  $U$ and $T$ are semi-compacts, then, the sequence $\{(x_{n}, y_{n})\}$ converges strongly to  $(x^{*}, y^{*})\in\Omega.$ 
\end{theorem}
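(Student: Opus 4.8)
The plan is to bootstrap the weak convergence already obtained in Theorem~\ref{T1} up to strong convergence: semi-compactness is used to extract a strongly convergent subsequence, the limit of that subsequence is identified with the weak limit $(x^{*},y^{*})$ produced by Theorem~\ref{T1}, and finally the monotonicity of $\Omega_{n}$ is used to promote this to convergence of the whole sequence. Fix $(x^{*},y^{*})\in\Omega$ to be the weak limit of $\{(x_{n},y_{n})\}$ furnished by Theorem~\ref{T1}. First I would record the facts carried over from that proof: $\{x_{n}\}$ and $\{y_{n}\}$ are bounded, $\|Uz_{n}-z_{n}\|\to0$ and $\|Tu_{n}-u_{n}\|\to0$ by (\ref{K}), $\|Ax_{n}-By_{n}\|\to0$ by (\ref{q}), and $\{\Omega_{n}\}=\{\|x_{n}-x^{*}\|^{2}+\|y_{n}-y^{*}\|^{2}\}$ converges, say to $L\geq0$; the goal is exactly $L=0$. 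Since $P_{C}$ and $P_{Q}$ are nonexpansive, $\{\lambda_{n}\}$ is bounded, and $\|Ax_{n}-By_{n}\|\to0$, the sequences $\{z_{n}\}$ and $\{u_{n}\}$ are bounded as well.

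Next I would apply semi-compactness twice. Since $\{z_{n}\}$ is bounded and $(I-U)z_{n}\to0$ strongly, semi-compactness of $U$ yields a subsequence $\{z_{n_{k}}\}$ that converges strongly to some $\bar z\in H_{1}$. The subsequence $\{u_{n_{k}}\}$ is still bounded and satisfies $(I-T)u_{n_{k}}\to0$, so semi-compactness of $T$ gives a further subsequence, which I relabel again as $\{n_{k}\}$, along which $u_{n_{k}}\to\bar u\in H_{2}$ strongly. Strong convergence implies weak convergence, so $z_{n_{k}}\rightharpoonup\bar z$ and $u_{n_{k}}\rightharpoonup\bar u$; running the argument from the proof of Theorem~\ref{T1} along this subsequence (the metric-projection step identifying the weak limit of $\{z_{n}\}$ with that of $\{x_{n}\}$, and likewise for $\{u_{n}\}$) together with uniqueness of weak limits forces $\bar z=x^{*}$ and $\bar u=y^{*}$. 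Hence $z_{n_{k}}\to x^{*}$ and $u_{n_{k}}\to y^{*}$ strongly.

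Finally I would transport the strong convergence one step along the iteration, using only the quasi-nonexpansiveness of $U$ and $T$. Since $x^{*}\in Fix(U)$, from $w_{n}=(1-\beta_{n})z_{n}+\beta_{n}Uz_{n}$ one gets $\|w_{n_{k}}-x^{*}\|\leq(1-\beta_{n_{k}})\|z_{n_{k}}-x^{*}\|+\beta_{n_{k}}\|Uz_{n_{k}}-x^{*}\|\leq\|z_{n_{k}}-x^{*}\|\to0$, hence $\|Uw_{n_{k}}-x^{*}\|\leq\|w_{n_{k}}-x^{*}\|\to0$, and then from $x_{n+1}=(1-\alpha_{n})z_{n}+\alpha_{n}Uw_{n}$ one obtains $\|x_{n_{k}+1}-x^{*}\|\leq\|z_{n_{k}}-x^{*}\|+\|Uw_{n_{k}}-x^{*}\|\to0$. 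The same computation on the second block gives $\|y_{n_{k}+1}-y^{*}\|\to0$. Therefore $\Omega_{n_{k}+1}\to0$; but $\{n_{k}+1\}$ is strictly increasing, so $\{\Omega_{n_{k}+1}\}$ is a subsequence of the convergent sequence $\{\Omega_{n}\}$, whence $L=0$ and $\Omega_{n}\to0$. Consequently $x_{n}\to x^{*}$ and $y_{n}\to y^{*}$ strongly, and since $(x^{*},y^{*})\in\Omega$ this is the desired conclusion.

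I expect the only genuinely delicate point to be the identification $\bar z=x^{*}$, $\bar u=y^{*}$: semi-compactness on its own produces merely \emph{some} strong cluster point, and it is the weak-convergence information coming from the proof of Theorem~\ref{T1}, combined with uniqueness of weak limits, that pins it to the right point. The rest is routine bookkeeping, provided one is careful to (a) verify boundedness of $\{z_{n}\}$ and $\{u_{n}\}$ before invoking semi-compactness, and (b) pass to a subsequence of a subsequence so that a single index sequence $\{n_{k}\}$ works simultaneously for $U$ and $T$.
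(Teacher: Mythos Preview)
Your argument follows the same core strategy as the paper's: use semi-compactness together with $\|Uz_{n}-z_{n}\|\to 0$ and $\|Tu_{n}-u_{n}\|\to 0$ to extract strongly convergent subsequences of $\{z_{n}\}$ and $\{u_{n}\}$, and then identify their strong limits with the weak limit $(x^{*},y^{*})$ produced in Theorem~\ref{T1}. The paper's proof does exactly this (with some index swaps between $u$ and $z$), and then simply asserts strong convergence of the whole iteration.

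The substantive difference is that your write-up is more complete on the final step. The paper stops once it has $z_{n_{k}}\to x^{*}$, $u_{n_{k}}\to y^{*}$ and $(x^{*},y^{*})\in\Omega$, declaring strong convergence without further justification; strictly speaking this only gives convergence along a subsequence of the auxiliary variables. You close that gap: you push the strong convergence from $z_{n_{k}},u_{n_{k}}$ forward one step (via quasi-nonexpansiveness of $U$ and $T$) to obtain $x_{n_{k}+1}\to x^{*}$ and $y_{n_{k}+1}\to y^{*}$, and then invoke the already-established convergence of the monotone quantity $\Omega_{n}=\|x_{n}-x^{*}\|^{2}+\|y_{n}-y^{*}\|^{2}$ to conclude $\Omega_{n}\to 0$ for the full sequence. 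You are also explicit about checking boundedness of $\{z_{n}\},\{u_{n}\}$ and about passing to a common subsequence so that both semi-compactness extractions are compatible. These are exactly the bookkeeping points the paper leaves implicit, so your version is a cleaner rendition of the same proof.
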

\begin{proof}
As in the proof of Theorem \ref{T1}, $\{u_{n}\}$ and $\{z_{n}\}$ are bounded, by (\ref{K}) and the fact that $U$ and $T$ are semi-compacts, then there exists a sub-sequences    $\{u_{n_{k}}\}$ and $\{z_{n_{k}}\}$ (suppose without loss of generality) of $\{u_{n}\}$ and $\{z_{n}\}$ such that $u_{n_{k}}\to x$ and $z_{n_{k}}\to y.$ Since, $u_{n}\rightharpoonup x^{*}$ and $z_{n}\rightharpoonup y^{*}$, we have $ x=x^{*}$ and $y= y^{*}.$ By (\ref{q}) and the fact that $u_{n_{k}}\to x^{*}$ and $z_{n_{k}}\to y^{*},$ we have
\begin{align}
\underset{n\to\infty}{\lim}\left\|Ax^{*}-Ay^{*}\right\|=\underset{n\to\infty}{\lim}\left\|Au_{n_{k}}-Bz_{n_{k}}\right\|=0.
\end{align}
which turn to implies that $Ax^{*}=Ay^{*}$. Hence $(x^{*}, y^{*})\in\Omega$. Thus, the iterative algorithm of Theorem \ref{T1} conveges strongly to the solution of Problem \ref{2.2}.
\end{proof}

\begin{cor}
Suppose  that  conditions $(B_{1})-(B_{6})$ are satisfied and let the sequence $\{(x_{n}, y_{n})\}$ be generated  by Algorithm (\ref{BUL}). 
Assume that $\Omega\neq\emptyset$ and let $U$ and $T$ be a firmly of quasi-nonexpansive mappings.
Then, the sequence  $\{(x_{n}, y_{n})\}$  generated  by Algorithm (\ref{BUL}) converges weakly to the solution set of Problem (\ref{2.2}).
\end{cor}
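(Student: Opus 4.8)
The plan is to recognise that this corollary is nothing more than a specialisation of Theorem \ref{T1}, so the real work is only to check that every firmly quasi-nonexpansive mapping with nonempty fixed point set already satisfies the standing hypotheses. First I would recall the two definitions from Section~2: $T$ is firmly quasi-nonexpansive when $\|Tx-q\|^{2}\le\|x-q\|^{2}-\|Tx-x\|^{2}$ for all $x\in H$ and all $q\in Fix(T)$, and $T$ is quasi-nonexpansive when $\|Tx-q\|\le\|x-q\|$ for the same $x$ and $q$. Since $\|Tx-x\|^{2}\ge 0$, discarding that nonnegative term and taking square roots shows immediately that a firmly quasi-nonexpansive mapping is quasi-nonexpansive. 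Thus the extra assumption in the corollary is merely a strengthening of assumption $(B_{2})$ and is perfectly compatible with it.

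Next I would observe that nothing else has to be re-established. The corollary explicitly postulates conditions $(B_{1})$–$(B_{6})$, so in particular assumption $(B_{4})$ — the demiclosedness of $U-I$ and $T-I$ at zero — and the precise form $(B_{6})$ of the iteration are in force, and $\Omega\neq\emptyset$ is assumed. Combined with the quasi-nonexpansiveness just deduced, all the hypotheses of Theorem \ref{T1} hold verbatim. Applying Theorem \ref{T1} then yields that the sequence $\{(x_{n},y_{n})\}$ produced by (\ref{BUL}) converges weakly to some $(x^{*},y^{*})\in\Omega$, i.e. to a point of the solution set of Problem (\ref{2.2}), which is exactly the claim.

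I do not expect a genuine obstacle here, precisely because $(B_{4})$ is assumed rather than derived; the only point that would require care if one wanted a self-contained statement (without assuming $(B_{4})$ for this subclass) is the demiclosedness principle for firmly quasi-nonexpansive maps. For that I would rewrite the defining inequality in its equivalent form $\langle x-Tx,\,q-Tx\rangle\le 0$ for every $q\in Fix(T)$ — obtained by expanding $\|x-q\|^{2}=\|x-Tx\|^{2}+2\langle x-Tx,\,Tx-q\rangle+\|Tx-q\|^{2}$ — and then take $x_{n}\rightharpoonup x$ with $\|x_{n}-Tx_{n}\|\to 0$. The delicate part is that weak convergence by itself does not control $\langle x-Tx,\,q-Tx\rangle$, so one has to combine the identity $\|x_{n}-Tx_{n}\|^{2}=\langle x_{n}-Tx_{n},\,x_{n}-q\rangle-\langle x_{n}-Tx_{n},\,Tx_{n}-q\rangle$ with the strong convergence $x_{n}-Tx_{n}\to 0$ to annihilate the bounded factors. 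Under the hypotheses as stated this refinement is unnecessary and the two short paragraphs above complete the proof.
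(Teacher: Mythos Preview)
Your proposal is correct and matches the paper's treatment: the paper states this corollary without proof, relying implicitly on the fact that firmly quasi-nonexpansive mappings form a subclass of quasi-nonexpansive mappings, so Theorem~\ref{T1} applies directly. Your explicit verification of this inclusion and the observation that $(B_{1})$--$(B_{6})$ are already assumed is exactly the intended (trivial) argument; the extra discussion of demiclosedness is unnecessary here but not wrong.
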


\begin{cor}
Suppose  that  conditions $$(B_{1})-(B_{5})$$ are satisfied are satisfied and let the sequence $\{(x_{n}, y_{n})\}$ be generated  by 

\begin{equation}
 {} \left\{ \begin{array}{ll} 
z_{n}=x_{n}-\lambda_{n} A^{*}(Ax_{n}-By_{n}),
\\  x_{n+1}=(1-\alpha_{n})z_{n}+\alpha_{n}U(z_{n}),
\\
\\u_{n}=y_{n}+\lambda_{n} B^{*}(Ax_{n}-By_{n}),
\\ y_{n+1}=(1-\alpha_{n})u_{n}+\alpha_{n}T(_{n}),  \forall n\geq 0. & \textrm{ $  $}
 \end{array}  \right.\label{BUL3}
\end{equation}
 where $0<a<\beta_{n}<1,$  and $\lambda_{n}\in\left(0, \frac{2}{L_{1}+L_{2}}\right),$ where $L_{1}$ and $L_{2}$ denote the spectral radius of the operators $A^{*}A$ and $B^{*}B,$ respectively. Assume that $\Omega\neq\emptyset.$ 
Then, the sequence  $\{(x_{n}, y_{n})\}$  generated  by Algorithm (\ref{BUL3}) converges weakly to the solution  of SEFPP (\ref{1.3}).
\end{cor}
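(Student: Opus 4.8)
The plan is to recognise Algorithm~(\ref{BUL3}) as the specialisation of Algorithm~(\ref{BUL}) obtained by taking $C=H_1$ and $Q=H_2$ (so that $P_C=P_Q=I$ and the projection lines of (\ref{BUL}) become the gradient lines of (\ref{BUL3})) together with $\beta_n\equiv 0$ (so that $w_n=z_n$ and $r_n=u_n$, which collapses the inner Krasnoselskii--Mann step). Under these identifications the solution set $\Omega$ of Problem~(\ref{1.4}) becomes $\{(x^*,y^*):x^*\in Fix(U),\ y^*\in Fix(T),\ Ax^*=By^*\}$, i.e.\ exactly the solution set of the SEFPP~(\ref{1.3}) when one reads $C=Fix(U)$, $Q=Fix(T)$. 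Hence it suffices to prove that $\{(x_n,y_n)\}$ converges weakly to a point of $\Omega$, and I would do this by re-running the argument of Theorem~\ref{T1} in this simpler setting, where the hypotheses $(B_1)$--$(B_5)$ supply exactly what is needed ($(B_2)$ for quasi-nonexpansiveness, $(B_4)$ for demiclosedness).

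Concretely, I would first fix $(x^*,y^*)\in\Omega$ and derive the Fej\'er-type recursion as in the proof of Theorem~\ref{T1}: expand $\|x_{n+1}-x^*\|^2$ from $x_{n+1}=(1-\alpha_n)z_n+\alpha_n Uz_n$, use $\|Uz_n-x^*\|\le\|z_n-x^*\|$, then expand $\|z_n-x^*\|^2$ from $z_n=x_n-\lambda_n A^*(Ax_n-By_n)$ using $\|A^*(Ax_n-By_n)\|^2\le L_1\|Ax_n-By_n\|^2$; do the same for the $y$-variables with $L_2$; and add the two chains, noting that the cross inner-product terms collapse to $-2\lambda_n\|Ax_n-By_n\|^2$ because $Ax^*=By^*$. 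With $\Omega_n:=\|x_n-x^*\|^2+\|y_n-y^*\|^2$ this gives the analogue of~(\ref{g}),
\[
\Omega_{n+1}\le\Omega_n-\lambda_n\bigl(2-\lambda_n(L_1+L_2)\bigr)\|Ax_n-By_n\|^2-\alpha_n(1-\alpha_n)\|Uz_n-z_n\|^2-\alpha_n(1-\alpha_n)\|Tu_n-u_n\|^2 .
\]
From the ranges $\lambda_n\in(0,2/(L_1+L_2))$ and $0<b<\alpha_n<1$ the subtracted coefficients are positive, so $\{\Omega_n\}$ is nonincreasing and convergent, and telescoping yields $\|Ax_n-By_n\|\to0$ and $\|Uz_n-z_n\|\to0$, $\|Tu_n-u_n\|\to0$ (the counterparts of (\ref{q}) and (\ref{K})).

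The endgame is then routine: $\{x_n\},\{y_n\}$ are bounded, $\|z_n-x_n\|\le\|A^*\|\,\lambda_n\|Ax_n-By_n\|\to0$ and $\|u_n-y_n\|\to0$, so any weak cluster point $(x,y)$ of $\{(x_n,y_n)\}$ is also one of $\{(z_n,u_n)\}$; demiclosedness at zero of $U-I$ and $T-I$ gives $x\in Fix(U)$ and $y\in Fix(T)$, and weak lower semicontinuity of the norm applied to $Az_{n_k}-Bu_{n_k}\rightharpoonup Ax-By$ together with $\|Ax_n-By_n\|\to0$ gives $Ax=By$, so $(x,y)\in\Omega$; since $\lim_n\Omega_n$ exists for every $(x^*,y^*)\in\Omega$, Opial's Lemma~\ref{opial} on $H_1\times H_2$ then yields weak convergence of $\{(x_n,y_n)\}$ to a single point of $\Omega$. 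I do not expect a genuine obstacle, as this is a strict simplification of Theorem~\ref{T1}; the one point meriting care is that the degenerate choice $\beta_n\equiv0$ removes the term $\alpha_n\beta_n(1-\beta_n)\|Uz_n-z_n\|^2$ exploited in Theorem~\ref{T1}, so one must instead read $\|Uz_n-z_n\|\to0$ (and likewise for $T$) off the outer averaging step, which uses that $\alpha_n(1-\alpha_n)$ stays bounded away from $0$, i.e.\ that $\alpha_n$ is bounded away from both $0$ and $1$.
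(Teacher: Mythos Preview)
Your reduction is exactly the one the paper uses: the paper's proof simply observes that Algorithm~(\ref{BUL}) collapses to Algorithm~(\ref{BUL3}) when $P_C=P_Q=I$ and $\beta_n=0$, that Problem~(\ref{1.4}) becomes the SEFPP~(\ref{1.3}) when $C:=Fix(U)$ and $Q:=Fix(T)$, and then declares that ``the proof follows directly from Theorem~\ref{T1}.''

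Your write-up is, however, more careful than the paper's on one point that you explicitly flag. The choice $\beta_n\equiv 0$ violates the standing hypothesis $0<a<\beta_n$ of $(B_6)$, so Theorem~\ref{T1} does not literally apply; in particular the term $\alpha_n\beta_n(1-\beta_n)\|Uz_n-z_n\|^2$ used in (\ref{g})--(\ref{K}) to force $\|Uz_n-z_n\|\to 0$ vanishes identically. You correctly repair this by going back to (\ref{d}) and keeping the discarded term $\alpha_n(1-\alpha_n)\|Uw_n-z_n\|^2$, which for $w_n=z_n$ equals $\alpha_n(1-\alpha_n)\|Uz_n-z_n\|^2$; this is precisely the right fix, and your remark that it additionally requires $\alpha_n$ to be bounded away from $1$ (not just from $0$) is on target. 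So your argument follows the paper's intended route but closes a gap the paper glosses over.
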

\begin{proof}
Trivially, Algorithm (\ref{BUL}) reduces to Algorithm (\ref{BUL3}) as $\beta=0,$ $P_{C}=P_{Q}=I$ and SFFPEP (\ref{1.4}) reduces to SEFPP (\ref{1.3}) as $C:=Fix(U)$ and $Q:=Fix(T).$ Therefore, all the hypothesis of Theorem  \ref{T1} are satisfied. Hence, the proof of this corollary follows directly from Theorem \ref{T1}. 
\end{proof}

\section{Numerical Example}
In this section, we give a numerical example  to illustrate our theoretical results. 
\begin{example}\label{example}Let $H_{1}=\Re$ with the inner product defined by $\left\langle x, y\right\rangle=xy$ for all $x, y\in \Re$ and  $\|.\|$ stand for the corresponding norm. Let $C:=[0,\infty),$  $Q:=[0,\infty)$ and define a mappings $T:C\to \Re$  and  $S:Q\to \Re$ by 
$Tx = \frac{x^{2}+5}{1+x},$ for all $x\in C$ and $Sx=\frac{x+5}{5}$, for all $x\in Q.$ Then $T$ and $S$ are quasi nonexpansive mappings.
\end{example}
\begin{proof}Trivially, $Fix(T)=5 $ and $Fix(S)=\frac{5}{4}.$  

Now, \begin{align*}\left|Tx-5 \right|&=\left|\frac{x^{2}+5}{1+x}-5\right|=\frac{x}{1+x}\left|x-5\right|
\\&\leq \left|x-5\right|.
\end{align*}

On the other hand,
\begin{align*}\left|Sx-\frac{5}{4} \right|&=\left|\frac{x+5}{5}-\frac{5}{4}\right|=\frac{1}{5}\left|x-\frac{5}{4}\right|
\\&\leq \left|x-5\right|.
\end{align*}
Hence, $T$ and $S$ are quasi-nonexpansive mappings. 
\end{proof}

\begin{example}Let $H_{1}=\Re$ with the inner product defined by $\left\langle x, y\right\rangle=xy$ for all $x, y\in \Re$ and  $\|.\|$ stand for the corresponding norm. Let $C:=[0,\infty),$  $Q:=[0,\infty)$ and define a mappings $U:C\to \Re$  and  $T:Q\to \Re$ by 
$Ux = \frac{x^{2}+5}{1+x},$ for all $x\in C$ and $Tx=\frac{x+5}{5}$, for all $x\in Q.$ And also let $P_{C}=P_{Q}=I,$  $Ax=x,$ $By=4y,$ $\lambda_{n}=1$, $\alpha_{n}=\frac{1}{5},$ $\beta_{n}=\frac{1}{8}$ and  $\{(x_{n},y_{n})\}$ be the sequence  generated by

\begin{equation}
 {} \left\{ \begin{array}{ll} x_{0}\in C{\rm~~and~~} y_{0}\in Q,
\\z_{n}=P_{C}(x_{n}- A^{*}(x_{n}-4y_{n})),
\\ w_{n}=(1-\frac{1}{8})z_{n}+\frac{1}{8}U(z_{n}),
\\ x_{n+1}=(1-\frac{1}{5})z_{n}+\frac{1}{5}U(w_{n}),
\\
\\u_{n}=P_{Q}(y_{n}+ B^{*}(x_{n}-4y_{n})),
\\ r_{n}=(1-\frac{1}{8})u_{n}+\frac{1}{8}T(u_{n}),
\\ y_{n+1}=(1-\frac{1}{5})u_{n}+\frac{1}{5}T(r_{n}),  \forall n\geq 0. & \textrm{ $  $}
 \end{array}  \right.\label{BUL11}
\end{equation}
Then, $\{(x_{n},y_{n})\}$ converges  to  $(5,5/4)\in\Omega$.
\end{example}
\begin{proof}By  Example \ref{example} $U$ and $T$ are quasi-nonexpansive mappings. Clearly,   $A$ and $B$ are  bounded linear operator on  $\Re$  with $A=A^{*}=1$ and $B=B^{*}=4,$ respectively. Furthermore, it is easy to see that $Fix(U)=5$ and $Fix(T)=\frac{5}{4}.$ Hence, 
\begin{equation*}
\Omega=\Big\{ 5\in C\cap Fix(U) {\rm~and~} 5/4\in Q\cap Fix(T)  {\rm~such ~that~} A(5)=B(5/4)\Big\}. 
\end{equation*}

After simplification, Algorithm (\ref{BUL11}) reduces to
\begin{equation}
 {} \left\{ \begin{array}{ll} x_{0}\in C{\rm~~and~~} y_{0}\in Q,
\\z_{n}=x_{n},
\\ w_{n}=\frac{7}{8}z_{n}+\frac{1}{8}(\frac{z_{n}^{2}+5}{z_{n}+1}),
\\ x_{n+1}=\frac{4}{5}z_{n}+\frac{1}{5}(\frac{w_{n}^{2}+5}{w_{n}+1}),
\\
\\u_{n}=y_{n},
\\ r_{n}=\frac{7}{8}u_{n}+\frac{1}{8}(\frac{u_{n}+5}{5}),
\\ y_{n+1}=\frac{4}{5}u_{n}+\frac{1}{5}(\frac{r_{n}+5}{5}),  \forall n\geq 0. & \textrm{ $  $}
 \end{array}  \right.
\end{equation}

\begin{table}[htbp]
	\begin{center}
		\caption{Starting with initial values $x_{0}=10$ and $y_{0}= 15$}
	\label{fig1}
		\begin{tabular}{|c|c|c|} 
			\hline
			n & $x_{n}$&$y_{n}$\\ [0.5ex] 
			\hline
			0 & 10.00000000&  15.00000000\\
			1 & 9.898293685&  12.74500000\\ 
			
			2 & 9.797736851 &10.85982000\\
			
			3 & 9.698337655 & 9.283809520\\
		
			. &.   &. \\
			
			. &.  &. \\
			
			. &.	& . \\
			
			248 &5.001051418&1.250000002 \\
			
			249 & 5.001012726	 & 1.250000002\\
			
			250 & 5.000975458	&  1.250000002	   \\ [1ex] 
			\hline
		\end{tabular}
	\end{center}
	\end{table}

\begin{figure}[htbp]
\caption{The convergence of $\{(x_{n}, y_{n})\}$ with the initial value $x_{0}=10$ and $y_{0}=15$}
	\label{fig2}
	\centering
		\includegraphics[scale=0.5]{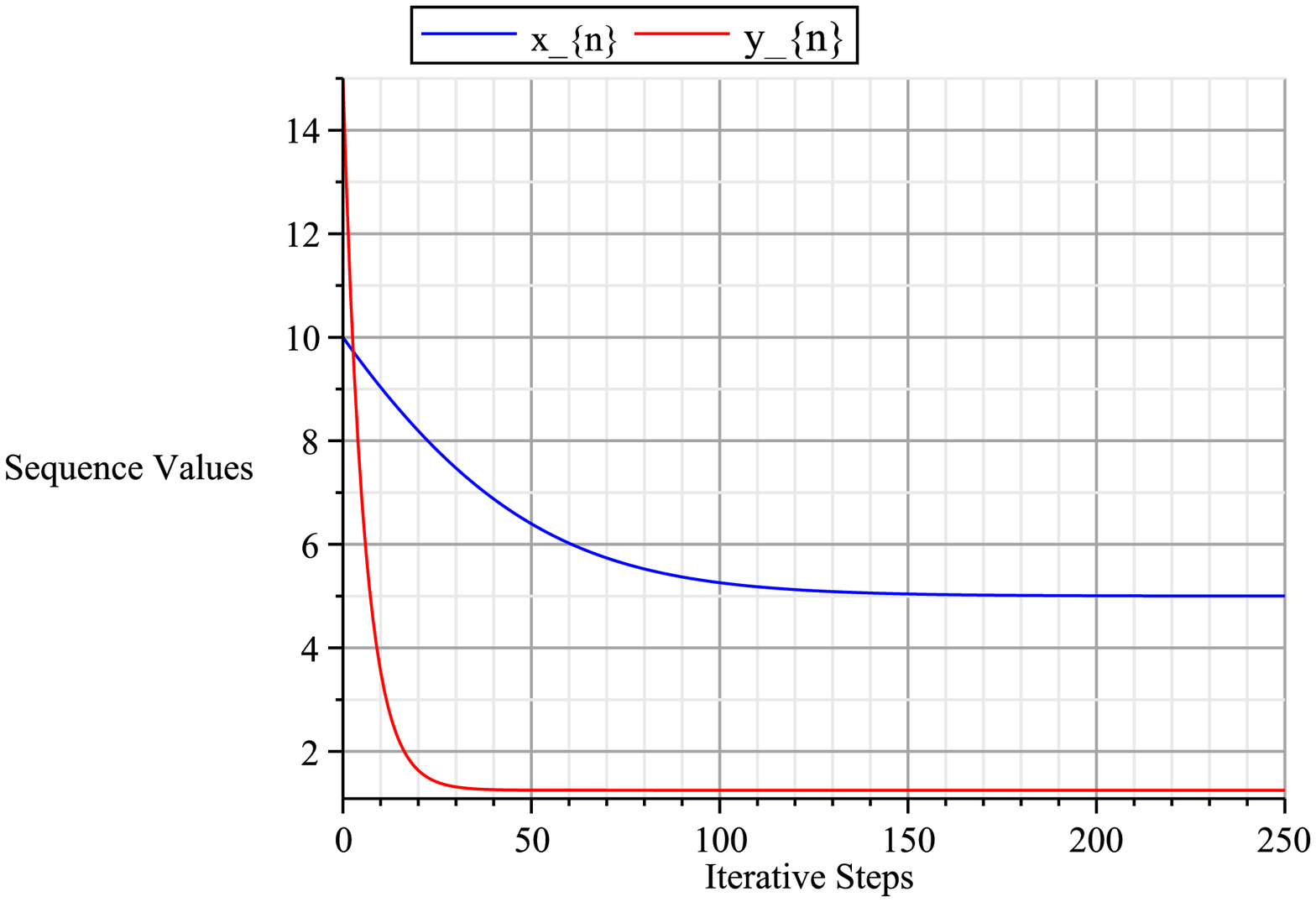}
		\end{figure}

\begin{table}[htbp]
	\begin{center}
		\caption{Starting with initial values $x_{0}=5$ and $y_{0}= 1.25$}
	\label{tab:StartingWithInialValue101}
		\begin{tabular}{|c|c|c|} 
			\hline
			n & $x_{n}$&$y_{n}$\\ [0.5ex] 
			\hline
			0 & 5.000000000&  1.250000000\\ 
			
			1 & 5.000000000 &1.250000000\\
			
			2 & 5.000000000 & 1.250000000\\
		
			. &.   &. \\
			
			. &.  &. \\
			
			. &.	& . \\
			
			98 &5.000000000&1.250000000 \\
			
			99 & 5.000000000	 & 1.250000000\\
			
			100 & 5.000000000	&  1.250000000	   \\ [1ex] 
			\hline
		\end{tabular}
	\end{center}
	\end{table}
\begin{figure}[htbp]
\caption{The convergence of $\{(x_{n}, y_{n})\}$ with the initial value $x_{0}=5$ and $y_{0}=1.25$}
	\label{fig4}
	\centering
		\includegraphics[scale=0.5]{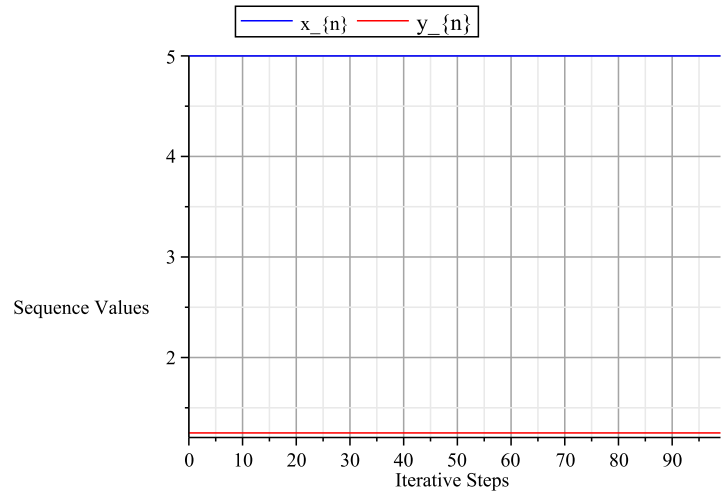}
		\end{figure}
\end{proof}

\section{Conclusion}
In this paper,  we introduce a new problem called split feasibility and fixed point equality problems (SFFPEP) and study it for the class of quasi-nonexpansive mappings in  Hilbert spaces. Under some suitable assumptions imposed on the parameters and operators involved, we proved a weak convergence theorem of the propose problem. Furthermore,  we gives a numerical example that illustrate our theoretical result.  The results presented in this paper,  extend and complement the results of;   Moudafi \cite{1}, Moudafi and Al-Shemas \cite{2},  Chen et al., \cite{8},  Byrne \cite{9}, Yuan et al., \cite{15} and Chidume et al., \cite{14}.  

The split feasibility and fixed point equality problem (SFFPEP) is a very interesting topic. Its generalizes the split feasibility  problem  (SFP), fixed point problem (FPP), split feasibilty and fixed point problem (SFFPP) and split equality fixed point problem (SEFPP) . All the results and conclusions that are true for the split feasibility and fixed point equality problem (SFFPEP) continue to holds for these problems (SFP,FPP,SFFPP and SEFPP) and it   definitely shows the significance and the range of applicability of split feasibility and fixed point equality problem (SFFPEP).

\begin{remark}
Theorem \ref{T2} gives a strong convergence result for the class of quasi-nonexpansive mappings with the assumption that each mapping is a semi-compact. This compactness type condition  appeared very strong as only few mapping are semi-compact.  

This leads us to think of the following question: 
\begin{enumerate}
 \item [(i)] Can the strong convergence of Theorem \ref{T1} be obtain  without imposing the semi-compactness   conditions? 
\item [(ii)]If the above answer is affirmative, can the strong convergence hold for the class of  infinite family of quasi-nonexpansive mappings?
\end{enumerate}
This will be our future research.
\end{remark}


\begin{thebibliography}{cite}
\bibitem{1} Moudafi, A. (2014). Alternating CQ-algorithm for convex feasibility and split fixed-point problems. J. Nonlinear Convex Anal, 15(4), 809--818.
\bibitem{2} Moudafi, A., \& Al-Shemas, E. (2013). Simultaneous iterative methods for split equality problem. Trans. Math. Program. Appl, 1(2), 1--11.
\bibitem{3}Moudafi, A. (2002). Mixed equilibrium problems: sensitivity analysis and algorithmic aspect. Computers \& Mathematics with Applications, 44(8), 1099--1108.
\bibitem{4} Censor, Y., \& Elfving, T. (1994). A multiprojection algorithm using Bregman projections in a product space. Numerical Algorithms, 8(2),  221--239.
\bibitem{5} Censor, Y., Bortfeld, T., Martin, B., \& Trofimov, A. (2006). A unified approach for inversion problems in intensity-modulated radiation therapy. Physics in Medicine and Biology, 51(10), 2353--2365.
\bibitem{6}Censor, Y., Elfving, T., Kopf, N., \& Bortfeld, T. (2005). The multiple-sets split feasibility problem and its applications for inverse problems. Inverse Problems, 21(6), 2071--2084.
\bibitem{7} Censor, Y., Motova, A., \& Segal, A. (2007). Perturbed projections and subgradient projections for the multiple-sets split feasibility problem. Journal of Mathematical Analysis and Applications, 327(2), 1244--1256.
\bibitem{8}Chen, J. Z., Ceng, L. C., Qiu, Y. Q., \& Kong, Z. R. (2015). Extra-gradient methods for solving split feasibility and fixed point problems. Fixed Point Theory and Applications, 2015(1), 1--21.
\bibitem{15} Ma, Y. F., Wang, L., \& Zi, X. J. (2013). Strong and weak convergence theorems for a new split feasibility problem. In Int. Math. Forum 8(33), 1621--1627.
\bibitem{9} Byrne, C. (2002). Iterative oblique projection onto convex sets and the split feasibility problem. Inverse Problems, 18(2),441--453.
\bibitem{10} Byrne, C. (2003). A unified treatment of some iterative algorithms in signal processing and image reconstruction. Inverse problems, 20(1), 103–-120.
\bibitem{11} Xu, H. K. (2010). Iterative methods for the split feasibility problem in infinite-dimensional Hilbert spaces. Inverse Problems, 26(10), 105018.
\bibitem{12} Qu, B., \& Xiu, N. (2005). A note on the CQ algorithm for the split feasibility problem. Inverse Problems, 21(5), 1655--1665.
\bibitem{13} Byrne, C., \& Moudafi, A. (2013). Extensions of the CQ algorithm for the split feasibility and split equality problems. Nonlinear and Convex Anal.
\bibitem{14} Chidume, C. E., Ndambomve, P., \& Bello, U. A. (2015). The split equality fixed point problem for demicontractive mappings. Journal of Nonlinear Analysis and Optimization: Theory \& Applications, 6(1), 61--69.
\bibitem{16} He, Z., \& Du, W. S. (2012). Nonlinear algorithms approach to split common solution problems. Fixed Point Theory and Applications, 2012(1), 1--14.
\bibitem{17}  Opial, Z. (1967). Weak convergence of the sequence of successive approximations for nonexpansive mappings. Bulletin of the American Mathematical Society, 73(4),  591--597.
\end{thebibliography}
\end{document}